\newtheorem{theorem}{Theorem}[section] 
\newtheorem*{m_t_1}{Theorem 4.1}
\newtheorem*{m_t_2}{Theorem 5.1}
\newtheorem*{m_t_3}{Theorem 5.3}
\newtheorem*{theorem-A}{Theorem A}
\newtheorem{lemma}[theorem]{Lemma} 
\newtheorem{proposition}[theorem]{Proposition} 
\newtheorem{definition}[theorem]{Definition} 
\newtheorem{remark}[theorem]{Remark}
\DeclareMathOperator{\rk}{rk} 
\DeclareMathOperator{\gk}{\mathscr{GK}-dim} 
\DeclareMathOperator{\s}{\ast} 
\DeclareMathOperator{\Hom}{Hom}  
\DeclareMathOperator{\supp}{Supp}  
\DeclareMathOperator{\Kdim}{K.dim} 
\title{Representations of the $n$-dimensional quantum torus}
\author{Ashish Gupta}
\date{}
\begin{document}
\maketitle

\begin{abstract}
The $n$-dimensional quantum torus $\mathcal O_{\mathbf q}((F^\times)^n)$ is defined as the associative $F$-algebra generated by $x_1, \cdots, x_n$ together with their inverses satisfying the relations $x_ix_j = q_{ij}x_jx_i$, where $\mathbf q = (q_{ij})$. We show that the modules that are finitely generated over certain  commutative sub-algebras $\mathscr B$ are $\mathscr B$-torsion-free and have finite length. 
We determine the Gelfand-Kirillov dimensions of simple modules in the case when \[ \Kdim(\mathcal O_{\mathbf q}((F^\times)^n)) = n - 1, \] where $\Kdim$ stands for the Krull dimension.  In this case if $M$ is a simple $\mathcal O_{\mathbf q}((F^\times)^n)$-module then $ \gk(M) = 1$ or \[ \gk(M) \ge \gk(\mathcal O_{\mathbf q}((F^\times)^n)) - \gk(\mathcal Z(\mathcal O_{\mathbf q}((F^\times)^n)))  - 1,\]
where $\mathcal Z(C)$ stands for the center of an algebra $C$. We also show that there always exists a simple $F \s A$-module satisfying the above inequality.
\end{abstract}

\section{Introduction}

The $n$-dimensional quantum torus $\mathcal O_{\mathbf q}((F^\times)^n)$ is defined as the (associative) $F$-algebra  
which is generated by the variables $x_1, x_2, \cdots, x_n$ together with their inverses satisfying the relations 
\begin{equation}
\label{def_reln}
x_ix_j = q_{ij}x_jx_i
\end{equation}
where $1 \le i,j \le n$ and $\mathbf q  = (q_{ij})$.
  
It plays an important role in non-commutative geometry and the theory of quantum groups.
The case $n = 2$ happens  to be relatively well studied (see \cite{Lo} and \cite{Ja}).  Here we consider the general case and in particular the structure and growth of modules.

It is well-known that the $n$-dimensional quantum torus has the structure of a twisted group algebra $F \s A$ of a free abelian group of rank $n$ over the field $F$.   The subgroups 
$B \le A$ so that the corresponding sub-algebra $F \s B$ commutative play an important role. For example we have the following result.

\begin{theorem}[Theorem A of \cite{Brookes:2000}]
\label{Brookes_thm}
The Krull and global dimensions of a twisted group algebra $F \s A$ equals the supremum of the ranks of subgroups $B \le A$  so that $F \s B$ is commutative. 
\end{theorem}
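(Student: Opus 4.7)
The plan is to exploit the alternating bicharacter $\chi \colon A \times A \to F^\times$ determined by $x_a x_b = \chi(a,b)\, x_b x_a$, under which $F \s B$ is commutative precisely when $B$ is \emph{isotropic} for $\chi$. Writing $r$ for the supremum of ranks of such isotropic subgroups, the strategy is to prove the two inequalities $\Kdim(F \s A) \ge r$ and $\Kdim(F \s A) \le r$ separately, with parallel arguments establishing the corresponding equality for global dimension.

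For the lower bound, I would fix an isotropic $B \le A$ of maximal rank $r$, so that $F \s B \cong F[B]$ is a commutative Laurent polynomial ring in $r$ variables, with both Krull and global dimension equal to $r$. Since $A/B$ is free abelian, the sequence $1 \to B \to A \to A/B \to 1$ splits, and $F \s A$ becomes a (twisted) iterated Laurent polynomial extension of $F \s B$. Each such Laurent extension preserves or raises Krull dimension, yielding $\Kdim(F \s A) \ge r$; faithful flatness of $F \s A$ over $F \s B$ gives the analogous bound on global dimension.

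For the upper bound I would argue by induction on $n = \rk(A)$. First reduce to the nondegenerate case: if $R$ is the radical of $\chi$, then $F \s R = F[R]$ is central in $F \s A$, and choosing a complement $A = R \oplus A'$ gives $F \s A \cong F[R] \otimes_F (F \s A')$ with $\chi$ nondegenerate on $A'$. Both dimensions and the maximal isotropic rank add across this tensor product, so it suffices to treat nondegenerate $\chi$, in which case $n$ is even and every Lagrangian subgroup has rank $n/2$. Here I would invoke the McConnell--Pettit structure theorem, which after inverting a suitable Ore set realises $F \s A$ (up to Morita equivalence) as a matrix algebra over a commutative Laurent polynomial ring of rank $n/2$; the Krull and global dimensions of this matrix algebra are exactly $n/2 = r$, and dimension-preservation under localisation and Morita equivalence transfers the bound back to $F \s A$.

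The principal obstacle is the nondegenerate case: one must show that the McConnell--Pettit localisation truly loses dimension, rather than merely passing to a Morita equivalent ring of the same rank. Verifying this requires a careful analysis of how the rank-$n/2$ central Laurent polynomial subring of the localised algebra controls both $\Kdim$ and the projective dimensions of modules over the full quantum torus, and that Ore localisation by a non-nilpotent regular element here does not inflate these dimensions. Once this transfer is in place, combining it with the inductive reduction to nondegenerate $\chi$ and with the lower bound yields the stated equality for both the Krull and global dimensions.
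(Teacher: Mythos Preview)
The paper does not prove this theorem; it is simply quoted as Theorem~A of \cite{Brookes:2000} and used as input for the paper's own results. There is therefore no proof here to compare against, but your sketch has a genuine gap worth flagging. (Your lower bound is essentially sound, modulo the unjustified claim that $A/B$ is free abelian---a maximal isotropic $B$ need not be a summand of $A$, though this is easily repaired.)

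The upper bound argument fails at the step where you assert that for nondegenerate $\chi$ ``$n$ is even and every Lagrangian subgroup has rank $n/2$''. You are treating the alternating bicharacter $\chi\colon A\times A\to F^\times$ as though it were a symplectic form over a field, but $F^\times$ is only an abelian group, and alternating $\mathbb Z$-bilinear maps into an arbitrary abelian group admit no such normal form. Concretely, take $A=\mathbb Z^3$ with $q_{12},q_{13},q_{23}$ multiplicatively independent in $F^\times$: the radical of $\chi$ is trivial, yet $n=3$ is odd and every isotropic subgroup has rank at most~$1$. A generic $4$-variable example likewise has trivial radical but maximal isotropic rank~$1$, not~$2$. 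So there is no Lagrangian decomposition, and no McConnell--Pettit theorem of the form you invoke realising $F\s A$, after localisation, as matrices over a Laurent polynomial ring in $n/2$ variables. (The reduction step is also shaky: the radical $R$ need not be a direct summand of $A$, as the $2$-dimensional torus with $q_{12}$ a primitive $m$-th root of unity already shows.) Brookes's actual argument for the upper bound is a more delicate induction through crossed products and does not pass through any symplectic normal form.
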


Our first result describes the structure of the $F \s A$-modules that are finitely generated over a sub-algebra $F \s B$ such that $F \s B$ is commutative.

\begin{m_t_1}
Suppose that the quantum torus $F \s A$ has center $F$. Let $M$ be a nonzero finitely  
generated  $F \s A$-module.  
Let $C < A$ be a subgroup having a subgroup $C_0$ of  
finite index such that $F \s C_0$ is commutative.  
If $M$ is finitely generated as an $F \s C$-module then,    
\begin{enumerate} 
\item[(i)] $\gk(M) = \rk(C)$,  
\item[(ii)] $M$ is $F \s C$-torsion-free,  
\item[(iii)] $M$ has finite length,  
\item[(iv)] $M$ is cyclic.      
\end{enumerate}  
\end{m_t_1}  

\subsection{Gelfand-Kirillov dimension of simple modules.}
Let  $\mathcal A$ be an affine algebra of finite Gelfand--Kirillov dimension (GK dimension) say $d$. If $M$ is a finitely generated $\mathcal A$-module then \[ 
0 \le \gk(M)  \le \gk(\mathcal A) \]holds (e.g., \cite[Proposition 5.1(d)]{KL:2000}).
However, not all values between $0$ and $\gk(\mathcal A)$ can be attained by the GK dimension of a finitely generated $\mathcal A$-module. For example, as is well known if $\mathcal A = A_n(k)$ the  $n$-th Weyl algebra over a field of characteristic zero, then the famous inequality of Bernstein says that  \[ \gk(M)  \ge \frac{1}{2}\gk(A_n(k)). \] 

The question arises as to what values can be assumed by the GK dimensions of simple $\mathcal A$-modules. For example, if $n \le l \le 2n - 1$ there exists a simple $A_n(\mathbb C)$-module $S_l$ with $\gk(S_l) = l$ (see \cite{Coutinho:1995}). The next question we take up concerns the GK  dimension of simple modules over the quantum tori. In \cite{MP}, it was shown that if $\dim (F \s A ) = 1$ then each simple 
$F \s A$-module $N$ satisfies $\gk(N) =  \rk(A) - 1$. 
Here $\dim(F \s A)$ stands for either the Krull or the global dimension of the quantum torus $F \s A$.

Note that $\dim(F \s A)$ satisfies 
\[ 1 \le \dim(F \s A) \le \rk(A). \]
If $\dim(F \s A) = \rk(A)$ then by Theorem \ref{Brookes_thm}
the algebra $F \s A$ is a finite normalizing extension of $F \s A'$, where the sub-algebra $F \s A'$ is commutative and so (e.g., \cite[Exercise 15A.3]{Rowen:2008}) 
a simple $F \s A$-module is a finite direct sum of simple $F \s A'$-modules.  Using this and the fact that simple modules over commutative affine algebras are finite dimensional it can be easily deduced that in this case $\gk(S) = 0$  for each simple $F \s A$-module $S$. 

Here we consider the case $\dim(F \s A) = n - 1$. We obtain the following results.

\begin{m_t_2}
Let $M$ be a simple-$F \s A$-module, where $\dim(F \s A) =  \rk(A) - 1$. Let $Z$ be the subgroup of $A$ such that $\mathcal Z(F \s A) = F \s  Z$. Then either \[ \gk(M) = 1, or \] 
\[ \gk(M) \ge \rk(A) - \rk(Z) - 1.  \]    
\end{m_t_2}

There always exists a simple $F \s A$-module satisfying the inequality given in the last theorem.

\begin{m_t_3}
Suppose that $\dim(F \s A) = \rk(A) - 1$. There exists a simple $F \s A$-module $M$ with \[ \gk(M) \ge \rk(A) - \rk(Z) - 1, \] where $Z$ is the unique subgroup of $A$ so that 
\[ \mathcal Z(F \s A) = F \s Z. \] 

\end{m_t_3}

Finally, we remark that it seems to be quite difficult to determine the possible values of GK-dimensions of simple modules over the quantum tori $F \s A$ such that 
\[ 2 \le \dim(F \s A) \le n - 2 \]It seems to depend on the defining parameters $q_{ij}$

\section{The twisted group algebra structure}

\label{TGA}

Let $F^* := F \setminus \{0\}$. Let $A$ denote a finitely generated free abelian group. 
We denote by $\rk(A)$ the (torsion-free ) rank of $A$. 
An $F$-algebra $\mathcal A$ is a \emph{twisted group algebra} $F \s A$ of $A$ over $F$ 
if $\mathcal A$ has a copy $\overline A := \{ \bar a : a \in A \}$ of $A$ which is an $F$-basis and such that the multiplication in 
$\mathcal A$ satisfies  
\begin{equation}
\label{tws}
\bar {a}_1 \bar {a}_2 = \tau(a_1, a_2)\overline{a_1a_2} \ \ \ \ \ \forall a_1, a_2 \in A,
\end{equation} 
where $\tau : A \times A \rightarrow F^*$ is a function satisfying
\[ \tau(a_1, a_2)\tau(a_1a_2, a_3) = \tau(a_2, a_3)\tau(a_1, a_2a_3) \ \ \ \ \ \forall a_1, a_2, a_3 \in A.  \]
For $a_1, a_2 \in A$, it easily follows from (\ref{tws}) that the group-theoretic commutator $[\bar a_1, \bar a_2] \in F^*$. 
\subsection{Commutator calculus}
\label{com_cal}
It follows from the
basic properties of commutators (e.g., \cite[Section 5.1.5]{Rob:1996}) that 
\begin{align}\label{com_id_1}
[\bar {a}_1 \bar {a}_2, \bar {a}_3] &= [\bar {a}_1, \bar {a}_3] [\bar {a}_2, \bar {a}_3],\\ \label{com_id_2}
[\bar {a}_1,  \bar {a}_2 \bar {a}_3] &= [\bar {a}_1, \bar {a}_2] [\bar {a}_1, \bar {a}_3], \\ \label{com_id_3}
[\bar {a}_1, \bar {a}_2^{-1}] &= [\bar {a}_1, \bar {a}_2]^{-1}, \\ \label{com_id_4} 
[\bar {a}_1^{-1}, \bar {a}_2] &= [\bar {a}_1, \bar {a}_2]^{-1}\ \ \ \ \ \forall a_1, a_2, a_3 \in A. 
\end{align}
For a subset $X$ of $A$, we define $\overline{X} = \{\bar x : x \in X \}$. 
If $X, Y \subset A$, we set 
\[ [\overline X, \overline Y] = \langle [\bar x, \bar y] : x \in X, y \in Y \rangle. \]
If $\alpha \in F \s A$, we may express $\alpha = \sum_{a \in A} \lambda_a \bar a$, where $\lambda_a \in F$ and  $\lambda_a = 0$ for almost all $a \in A$.
We define the support of $\alpha$ (in $A$) as  
\[ \supp(\alpha) = \{ a \in A \mid \lambda_a \ne 0 \}.\]
Note that for a subgroup $B$ of $A$, the sub-algebra generated by $\overline B \subset F \s A$ is a twisted group algebra $F \s B$. It is described as 
\[ F \s B = \{ \beta \in F \s A \mid \supp(\beta) \subset B\} \]

\subsection{The center}
It was shown in \cite[Proposition 1.3]{MP} that an algebra $F \s A$ is simple if and only if it has center $F$. The following fact might be quite expected.

\begin{proposition}
\label{c_a_fis}
An algebra $F \s A$ has center exactly $F$ if and only if for each subgroup $A_1 < A$ with finite index 
$F \s A_1$ has center $F$.
\end{proposition}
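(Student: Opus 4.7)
The plan is to rephrase the statement in purely group-theoretic terms. A direct calculation from (\ref{tws}), together with the fact that every commutator $[\bar a_1, \bar a_2]$ lies in $F^\times$ and therefore in the center of $F \s A$, shows that $\mathcal Z(F \s A)$ is spanned over $F$ by those $\bar a$ for which $[\bar a, \bar b] = 1$ for all $b \in A$. Setting
\[ Z := \{\, a \in A : [\bar a, \bar b] = 1 \text{ for every } b \in A \,\}, \]
we obtain $\mathcal Z(F \s A) = F \s Z$, and the analogous $Z_1 \le A_1$ satisfies $\mathcal Z(F \s A_1) = F \s Z_1$. The proposition thus reduces to the claim that $Z = \{1\}$ if and only if $Z_1 = \{1\}$ for every finite-index subgroup $A_1 < A$.

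For the forward direction I would assume $Z = \{1\}$, fix a finite-index subgroup $A_1 < A$ of index $m$, and take $a \in Z_1$. For each $b \in A$, Lagrange's theorem gives $b^m \in A_1$, hence $[\bar a, \overline{b^m}] = 1$. Since $\overline{b^m}$ differs from $\bar b^{\,m}$ only by a scalar in $F^\times$ and scalars drop out of commutators, one may replace $\overline{b^m}$ by $\bar b^{\,m}$; iterating (\ref{com_id_2}) then yields $[\bar a, \bar b]^m = [\bar a, \bar b^{\,m}] = 1$. Applying (\ref{com_id_1}) in the same way gives $[\bar a^{\,m}, \bar b] = [\bar a, \bar b]^m = 1$ for every $b \in A$, so $a^m \in Z = \{1\}$. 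The torsion-freeness of $A$ then forces $a = 1$, and hence $Z_1 = \{1\}$.

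For the converse I suppose some $z \in Z$ is nontrivial and aim to produce a proper finite-index subgroup $A_1 < A$ for which $Z_1 \ne \{1\}$. Since $z \ne 1$ we have $\rk(A) \ge 1$, so the subgroup $A_1 := \{\, a^2 : a \in A \,\}$ has index $2^{\rk(A)} \ge 2$ and is proper. Then $z^2 \in A_1$, $z^2 \in Z$ (as $Z$ is a subgroup), and $z^2 \ne 1$ by torsion-freeness; moreover the inclusion $A_1 \subseteq A$ immediately gives $Z \cap A_1 \subseteq Z_1$. Hence $z^2 \in Z_1 \setminus \{1\}$, contradicting $\mathcal Z(F \s A_1) = F$.

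The only real subtlety lies in the forward direction, where a commutation condition holding only on $A_1$ must be promoted to one holding on all of $A$. The key observation is that for $a \in Z_1$ the homomorphism $b \mapsto [\bar a, \bar b]$ from $A$ to $F^\times$ factors through the finite quotient $A/A_1$, so its image consists of $m$-th roots of unity; raising $a$ to the $m$-th power then kills every commutator, and torsion-freeness of $A$ delivers the conclusion. Everything else is routine bookkeeping with the commutator identities (\ref{com_id_1})--(\ref{com_id_4}).
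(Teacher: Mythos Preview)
Your proof is correct and follows essentially the same route as the paper's: both reduce to monomial centralisers and, for the forward direction, use the commutator identities to show that if $\bar a$ is central in $F \s A_1$ then $\bar a^{\,m}$ (with $m=[A:A_1]$) is central in $F \s A$, forcing $a=1$ by torsion-freeness. The only difference is packaging: the paper cites \cite[Proposition~1.3]{MP} for the monomial reduction and dismisses the converse as ``clear'', whereas you carry out both steps explicitly.
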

\begin{proof}
Suppose that $F \s A$ has center $F$. Let $A_1 \le A$ be a subgroup such that $l : = [A : A_1] < \infty$. 
We claim that $F \s A_1$ also has center $F$. Using \cite[Proposition 1.3]{MP}, we may assume that $\bar {a}_1$ is central in $F \s A_1$ for $1 \ne a_1 \in A_1$. For any $a \in A$, (\ref{com_id_1}) and (\ref{com_id_2}) yield:
\[ [ \bar {a}_1^l, \bar a ] = [ \bar {a}_1, \bar a ]^l =  [ \bar {a}_1, \bar a^l ] = 1, \] 
where the last equality holds since $a^l \in A_1$. 
Since $A$ is torsion-free by definition, $1 \ne a_1^l$. Thus $\bar {a}_1^l$ is a non-scalar central element of $F \s A$. 
The converse is clear. 
\end{proof} 

It may also be expected that the center of a twisted group algebra has the form $F \s B$ for a subgroup $B \le A$.

\begin{proposition}
The center of a twisted group algebra $F \s A$ has the form $F \s B$ for a suitable subgroup $B$ in $A$.
\end{proposition}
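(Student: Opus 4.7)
The plan is to produce the subgroup $B$ explicitly and then show by a linear-independence argument that every central element must have support in $B$. Define
\[ B := \{ a \in A : [\bar a, \bar c] = 1 \text{ for all } c \in A \}. \]

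The first step is to check that $B$ is a subgroup of $A$. Closure under inversion is immediate from (\ref{com_id_4}). For closure under products, given $a, a' \in B$ and any $c \in A$, identity (\ref{com_id_1}) gives $[\bar a \bar a', \bar c] = [\bar a, \bar c][\bar a', \bar c] = 1$; since $\bar a \bar a' = \tau(a,a') \overline{aa'}$ and the scalar $\tau(a, a') \in F^*$ is central, it drops out of the commutator, so $[\overline{aa'}, \bar c] = 1$ and thus $aa' \in B$.

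The second step is the key observation. An element $\alpha \in F \s A$ is central if and only if it commutes with every basis element $\bar c$. Writing $\alpha = \sum_a \lambda_a \bar a$ and using the relation $\bar a \bar c = [\bar a, \bar c] \bar c \bar a$, one gets
\[ \alpha \bar c - \bar c \alpha = \sum_{a \in \supp(\alpha)} \lambda_a \bigl( [\bar a, \bar c] - 1 \bigr) \bar c \bar a. \]
The elements $\{\bar c \bar a : a \in A\}$ are nonzero scalar multiples of the distinct basis elements $\overline{ca}$ and hence are $F$-linearly independent. Centrality therefore forces $[\bar a, \bar c] = 1$ for every $a \in \supp(\alpha)$ and every $c \in A$, which is exactly the condition $\supp(\alpha) \subset B$. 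Conversely, any $\alpha$ with $\supp(\alpha) \subset B$ is central by construction, so $\mathcal Z(F \s A) = F \s B$.

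The argument is essentially a bookkeeping exercise in the commutator calculus recalled in Section \ref{com_cal}, so there is no serious obstacle. The one point that requires minor care is transferring commutator identities from the basis elements $\bar a, \bar a'$ to the product $\overline{aa'}$; this works cleanly because the twist factors $\tau$ take values in the central subfield $F^*$ and so drop out of every commutator.
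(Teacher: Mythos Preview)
Your proof is correct and follows essentially the same approach as the paper: both arguments compute the Lie bracket of a candidate central element with basis elements and use the $F$-linear independence of $\{\overline{ca}\}$ to conclude that each element of the support must itself give a central basis element. The only cosmetic differences are that the paper checks commutation against a fixed generating set $\bar x_1,\dots,\bar x_n$ rather than all $\bar c$, and defines $B$ as the subgroup generated by the supports of central elements rather than directly as $\{a\in A:\bar a\text{ is central}\}$; your explicit verification that this set is already a subgroup via the commutator identities is a minor refinement.
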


\begin{proof}
Let $\mathcal Z$ be he center of the algebra $F \s A$ and
$\zeta \in \mathcal Z$. Write
$\zeta = \sum_{i = 1}^t \lambda_iz_i$.
Let $A = \langle x_1, \cdots, x_n \rangle$
The condition $\zeta \in \mathcal Z$ is equivalent to the $n$ conditions $[\bar {x}_i, \zeta] = 0$, where $1 \le i \le n$ and $[a, b]$ stands for the lie commutator $[a, b] = ab - ba$.
Now 
\begin{align*}
[\bar x_i, \zeta ] &= [\bar x_i, \sum_{j = 1}^t \lambda_j a_j ] \\
&= \sum_{j = 1}^t \lambda_j[x_i, a_j ].
\end{align*}

From the defining relations we have $[\bar x, \bar y] = \gamma(x,y) \overline {xy}$, where $\gamma(x,y) \in F$.  
Therefore 
\[ [\bar x_i, \zeta ] = \sum_{j = 1}^t \lambda_j\gamma(\bar {x}_i,\bar {a}_j) \overline {x_ia_j}. \] 
Hence $[x_i, \zeta] = 0$ if and only if $\gamma(\bar{x}_i, \bar {a}_j) = 0$. But this means that $[\bar{x}_i, \bar {a}_j] = 0$ and so $\bar a_j \in \mathcal Z$. Thus 
$\mathcal Z = F \s B$ where 
\[ B : = \langle \cup \supp(\zeta) \mid \zeta \in \mathcal Z \rangle. \]  
\end{proof}

\subsection{Localization}
It is well-known (see, for example, \cite[Lemma 37.8]{Pass:1989}) that for subgroups $B \le A$ the subsets $\mathcal X_B: =  F \s B \setminus \{0\}$ are Ore subsets in the algebra $F \s A$ and the latter has a (right) Ore localization with respect to $\mathcal X_B$ which we denote as $(F \s A)\mathcal X_B^{-1}$.

The localization has the structure of a crossed product of the group $A/B$ over the quotient division ring $\mathscr D_B$ of $F \s B$. We refer the reader to \cite{Pass:1989} for details on crossed products. Recall that if $M$ is an $F \s A$-module the corresponding localization $M(\mathcal X)^{-1}$ is a module for $(F \s A)\mathcal X_B^{-1}$. We refer the reader to \cite{GW:1989} for a discussion of localization in a general non-commutative setting.   

Let $M$ be an $F \s A$-module and $C \le A$. As we noted above $\mathcal X : = F \s C \setminus \{0\}$ is an Ore subset in $F \s A$. The subset
$T_{\mathcal X}(M)$ of $M$ defined by 
\[ T_{\mathcal X}(M)  :=  \{x \in M \mid m.x = 0, x \in X \} \]
is a submodule of $M$ and is known as the $\mathcal X$-torsion submodule of $M$.  We will abuse notation somewhat and call it the $F \s C$-torsion submodule of $M$.

\section{The GK dimension of an $F \s A$-module}

\label{GK-dim-bsc}

In this section we shall describe a dimension for finitely generated $F \s A$-modules introduced in 
\cite{BG:2000} that coincides with the GK dimension. More precisely, given a finitely generated $F \s A$-module $M$ it is shown in \cite{BG:2000} that $\gk(M)$ is the supremum of the ranks of subgroups $B \le A$ so that $M$ is not $F \s B$-torsion. We shall use the notation
\[ B \bowtie M \] 
to denote the fact that $M$ is not $F \s B$-torsion.
In \cite{BG:2000} it was also shown that for each choice of (free) generators $A : \langle x_1, x_2, \cdots, x_n \rangle$  there is a subset $\langle x_{i_1}, x_{i_2}, \cdots, x_{i_k} \rangle$ of the generators so that \[  \langle x_{i_1}, x_{i_2}, \cdots, x_{i_k} \rangle \bowtie M \]
and $k = \gk(M)$.
In this case we may localize $M$ at the nonzero elements of $F \s \langle x_{i_1}, x_{i_2}, \cdots, x_{i_k} \rangle$ and the nonzero module of fractions thus obtained is necessarily finite dimensional as a vector space over the quotient division ring of 
$F \s \langle x_{i_1}, x_{i_2}, \cdots, x_{i_k} \rangle$.
In this situation the following holds.

\begin{lemma}[Lemma 4.1 of \cite{Gupta:2011}]
\label{AG_reslt}
Suppose that $F \s A$ has a
finitely generated module
$M$ and $A$ has a subgroup $C$ with $A/C$ torsion-free,
$\rk(C) = \gk(M)$,
and $F \s C$ commutative. Suppose moreover that $M$
is not $F \s C$-torsion.
Then $C$ has a virtual complement $E$ in $A$
such that $F \s E$ is commutative. 
Furthermore given $\mathbb Z$-bases 
$\{ x_1,\cdots, x_r \}$ and $\{ x_1,\cdots, x_r, 
x_{r + 1}, \cdots, x_n \}$ for 
$C$ and $A$ respectively there exist monomials $\mu_j$, where $j = 
 r + 1, \cdots n $, 
in $F \s C$, 
and an integer $s > 0$ such that the monomials $\mu_j \bar {x}_j^s$ commute in $F \s A$.
\end{lemma}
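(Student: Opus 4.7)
The plan is to localize $M$ at the Ore set $\mathcal X := F \s C \setminus \{0\}$, use the Gelfand--Kirillov hypothesis to force finite-dimensionality over the quotient field of $F \s C$, and then convert this finiteness into the explicit algebraic conditions promised by the lemma. Since $F \s C$ is a commutative Noetherian Laurent polynomial algebra, $\mathcal X$ is Ore in $F \s A$; the localization $\tilde M := M\mathcal X^{-1}$ is nonzero because $M$ is not $F \s C$-torsion, and it is finitely generated over $R := (F \s A)\mathcal X^{-1}$. The ring $R$ carries the structure of a crossed product $\mathscr D_C \s (A/C)$, where $\mathscr D_C$ is the quotient field of $F \s C$ and $A/C$ is free abelian of rank $n - r$; the commutation data is recorded by the scalars $[\bar a, \bar b] \in F^\times$ from \S\ref{com_cal}.

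The first main step is to show that $\tilde M$ is finite-dimensional as a $\mathscr D_C$-vector space. By the \cite{BG:2000} characterization recalled in \S\ref{GK-dim-bsc}, $\gk(M) = \sup\{\rk(B) : B \bowtie M\}$, so the hypothesis $\gk(M) = \rk(C)$ forces $M$ to be $F \s B$-torsion for every subgroup $C < B \le A$. Applied to $B = \langle C, x_j\rangle$ for each $j \in \{r+1, \ldots, n\}$, this yields that $\tilde M$ is torsion over the skew Laurent polynomial subring $R_j := \mathscr D_C[\bar x_j^{\pm 1}; \sigma_j] \subseteq R$, where $\sigma_j$ is the automorphism of $\mathscr D_C$ induced by conjugation by $\bar x_j$. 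Since each $R_j$ is a non-commutative principal ideal domain over the field $\mathscr D_C$, every cyclic torsion $R_j$-module is finite-dimensional over $\mathscr D_C$. Starting from a finite $\mathscr D_C$-subspace $V \subseteq \tilde M$ generating $\tilde M$ as an $R$-module and enlarging $V$ iteratively by the actions of $\bar x_{r+1}, \ldots, \bar x_n$ (using that $R$ is spanned over $\mathscr D_C$ by monomials in these elements), one concludes that $\tilde M$ itself is finite-dimensional over $\mathscr D_C$.

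The second, decisive step is to pass to a simple $R$-subfactor $N$ of $\tilde M$ (which inherits finite-dimensionality over $\mathscr D_C$) and extract from it the commuting monomials. Expanding via the bilinearity identities \eqref{com_id_1}--\eqref{com_id_2}, the commutation of $\bar c_j \bar x_j^s$ with $\bar c_k \bar x_k^s$ for $c_j, c_k \in C$ is equivalent to
\[
[\bar c_j, \bar x_k]^{s}\,[\bar x_j, \bar c_k]^{s}\,[\bar x_j, \bar x_k]^{s^2} = 1
\qquad (r < j \ne k \le n).
\]
On the finite-dimensional $\mathscr D_C$-space $N$ the operators $\bar x_j$ act as invertible $\sigma_j$-semilinear maps with $\bar x_j \bar x_k = [\bar x_j, \bar x_k]\,\bar x_k \bar x_j$, and the finiteness of $\dim_{\mathscr D_C} N$ imposes algebraic relations on this semilinear action; a reduced-norm / determinant analysis of the powers $\bar x_j^s$ produces an integer $s > 0$ together with elements $c_j \in C$ solving the displayed system. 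Setting $\mu_j := \bar c_j$ and $E := \langle c_j x_j^s : r < j \le n \rangle$, one then reads off the lemma: the $\mu_j \bar x_j^s$ commute pairwise, so $F \s E$ is commutative; $E \cap C = \{e\}$ because the cosets $x_j^s C$ are independent in $A/C$; and $[A : CE] \le s^{n-r} < \infty$, so $E$ is a virtual complement of $C$.

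The main obstacle lies in the last step: extracting from the representation-theoretic finite-dimensionality of $N$ over $\mathscr D_C$ the algebraic solvability of the $[-,-]$-system in $F^\times$. The preceding bookkeeping is routine, but this translation is the heart of the argument and requires a careful interplay between the crossed-product structure of $R$ and the multiplicative structure of the commutation scalars on $A$.
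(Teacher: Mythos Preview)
The paper does not prove this lemma at all: it is quoted verbatim as Lemma~4.1 of \cite{Gupta:2011} and then invoked as a black box in the proofs of Theorem~\ref{MT1} and the results of Section~5. There is therefore nothing in the present paper to compare your argument against; a comparison would require consulting the original source \cite{Gupta:2011}.

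That said, your outline is plausible up to the point you yourself flag. The reduction to a finite-dimensional $\mathscr D_C$-module $N$ via localization is standard (indeed the paper alludes to exactly this in the paragraph preceding the lemma), and the bilinear reformulation of the commutation condition is correct. The gap is precisely where you place it: you assert that ``a reduced-norm / determinant analysis'' of the semilinear operators on $N$ produces the integer $s$ and the monomials $\mu_j$, but you do not carry this out, and this is the entire content of the lemma. Concretely, one needs to show that the alternating form $\lambda(a,b) := [\bar a, \bar b]$ on $A$, restricted to a suitable finite-index sublattice, admits $C$ as a Lagrangian-type summand with a commuting complement; the existence of the finite-dimensional $\mathscr D_C$-representation constrains the values $[\bar x_j, \bar x_k]$ to lie in the image of the bimultiplicative map $[\bar C, \bar x_j] \cdot [\bar x_k, \bar C]$ up to roots, and making this precise is the substantive work. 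Without that step, what you have written is a correct framing of the problem rather than a proof.
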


\begin{remark}
\label{fi}
In the case $A/C$ is not torsion-free $A$ has a subgroup $A_1$ of finite index so that $A_1/C$ is torsion-free.  Moreover $M$ is also a finitely generated $F \s A_1$-module with the same GK dimension.
We may thus apply the foregoing lemma which gives a virtual complement $E_1$ of $C$ in $A_1$ so that $F \s E_1$ is commutative. 
But then $E_1$ is also a virtual complement of $C$ in $A$. Hence the first part of Lemma \ref{AG_reslt} holds true even when $A/C$ is not torsion-free.

\end{remark}

In practice it is much more useful to work with the so-called critical modules.  

\begin{definition}
An $F \s A$-module is said to be \emph{critical}  if $M$ is nonzero and every proper quotient $M/N$ of $M$ satisfies
\[ \gk(M/N) < \gk(M), \ \ \ \ \ \ \ \ \ \ 0 < N< M. \]\end{definition}

In \cite{BG:2000} the following property was shown for the GK dimension of $F \s A$-modules.

\begin{proposition}[Lemma 2.2 of \cite{BG:2000}]
\label{exact_fn}
Let $M$ be an $F \s A$-module with a submodule $N$.
Then 
\[ \gk(M) = \max(\gk(N), \gk(M/N)) \]
\end{proposition}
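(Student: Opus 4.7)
The plan is to invoke the torsion-theoretic description of $\gk$ recalled at the beginning of Section~\ref{GK-dim-bsc}: $\gk(M)$ equals the supremum of $\rk(B)$ taken over subgroups $B \le A$ with $B \bowtie M$. It therefore suffices to verify, for every subgroup $B \le A$, the equivalence
\[ B \bowtie M \iff B \bowtie N \text{ or } B \bowtie M/N. \]
Taking the supremum of $\rk(B)$ over both sides then immediately yields the claimed identity $\gk(M) = \max(\gk(N), \gk(M/N))$.

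The direction $(\Leftarrow)$ is straightforward. If $B \bowtie N$, any $x \in N$ with $\ann_{F\s B}(x) = 0$ is, as an element of $M$, still annihilated by no nonzero element of $F\s B$, so $B \bowtie M$. If instead $B \bowtie M/N$, lift $\bar x \in M/N$ with trivial annihilator to some $x \in M$; any relation $\beta x = 0$ in $M$ forces $\beta \bar x = 0$ in $M/N$, hence $\beta = 0$, giving $B \bowtie M$. For $(\Rightarrow)$, pick $x \in M$ with $\ann_{F\s B}(x) = 0$. Either $\bar x = x + N$ still has trivial annihilator in $M/N$, in which case $B \bowtie M/N$; or there exists $0 \ne \beta \in F\s B$ with $\beta x \in N$, and then $y := \beta x$ is a nonzero element of $N$ (it cannot vanish without contradicting the hypothesis on $x$) whose $F\s B$-annihilator is again trivial: if $\gamma y = 0$ then $\gamma \beta x = 0$, so $\gamma \beta = 0$ by the choice of $x$, whence $\gamma = 0$ because $F\s B$ is a domain. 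Hence $y$ witnesses $B \bowtie N$.

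I anticipate no substantive obstacle. The only nontrivial input is that $F\s B$ is a domain, which holds because $B \le A$ is free abelian, so $F\s B$ is an iterated skew Laurent extension of $F$, in particular an Ore domain. The same argument can equivalently be phrased via the exactness of Ore localization at $\mathcal X_B := F\s B \setminus \{0\}$: the short exact sequence $0 \to N \to M \to M/N \to 0$ localizes to an exact sequence, and its middle term $M\mathcal X_B^{-1}$ is nonzero (equivalently $B \bowtie M$) iff at least one of $N\mathcal X_B^{-1}$ and $(M/N)\mathcal X_B^{-1}$ is nonzero. Either packaging makes it clear that the proposition is purely a consequence of the torsion characterization combined with the domain/Ore structure of $F\s B$, with no input from the full quantum torus beyond that.
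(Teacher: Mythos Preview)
The paper does not actually prove this proposition; it simply cites it as Lemma~2.2 of \cite{BG:2000}. Your argument is correct and self-contained: the equivalence $B \bowtie M \iff (B \bowtie N \text{ or } B \bowtie M/N)$ follows exactly as you describe, using only that $F \s B$ is a domain (equivalently, exactness of Ore localization at $\mathcal X_B$), and taking suprema of ranks then gives the result via the torsion-theoretic description of $\gk$ recalled in Section~\ref{GK-dim-bsc}. This is in fact essentially how the result is established in \cite{BG:2000}, where the dimension is introduced through the torsion characterization and the exactness property is derived directly from it; so your proof is not a different route but rather a faithful reconstruction of the cited argument. One small point worth making explicit: the torsion characterization is stated for \emph{finitely generated} modules, so you are implicitly using that $F \s A$ is Noetherian to ensure $N$ and $M/N$ inherit finite generation from $M$.
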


The usefulness of the concept of a critical module owes itself to the following fact.

\begin{proposition}[Lemma 2.5 of \cite{BG:2000}]
Each nonzero $F \s A$-module contains a critical submodule. 
\end{proposition}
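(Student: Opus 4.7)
The plan is to proceed by induction on $d = \gk(M)$, reducing first to the case of finitely generated $M$ (any nonzero cyclic submodule of $M$ is finitely generated, and a critical submodule of it remains critical in $M$). By the characterization of GK-dimension for $F\s A$-modules recalled at the start of Section \ref{GK-dim-bsc}, $d$ is a non-negative integer bounded by $\rk(A)$.

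For the base case $d = 0$, the torsion characterization forces $M$ to be $F\s\langle a\rangle$-torsion for every $1 \neq a \in A$; combined with finite generation over $F\s A$ this gives $\dim_F M < \infty$, so any simple $F\s A$-submodule of $M$ is vacuously critical. For the inductive step I would invoke the Noetherian property of $F\s A$: the family of submodules of $M$ with GK-dimension strictly less than $d$ is nonempty (it contains $0$) and satisfies ACC, so it admits a maximal element $K$. If $K \neq 0$, the inductive hypothesis applied to $K$ produces a critical submodule of $K$, which is still critical when regarded as a submodule of $M$.

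The main obstacle is the case $K = 0$, where Proposition \ref{exact_fn} forces every nonzero submodule of $M$ to have GK-dimension exactly $d$. Here I would aim to show that $M$ itself is critical. If $M$ fails to be critical, a maximality argument on the family of proper submodules $L \subsetneq M$ with $\gk(M/L) = d$ yields a maximal such $L^*$ and hence a critical quotient $M/L^*$; I would then convert this critical quotient into a critical submodule of $M$ by appealing to the localization framework from Section \ref{TGA}. Concretely, I would pick a subgroup $B \le A$ of rank $d$ with $M$ not $F\s B$-torsion and pass to the Ore localization $M\mathcal X_B^{-1}$ over the crossed product $\mathscr D_B \s (A/B)$. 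The maximality of $\rk(B) = \gk(M)$ ensures that the additional generators of $A/B$ act algebraically on this localization, which together with finite generation of $M$ forces $M\mathcal X_B^{-1}$ to have finite length over the quotient division ring $\mathscr D_B$; a simple submodule of $M\mathcal X_B^{-1}$ then pulls back along the localization map to a critical $F\s A$-submodule of $M$, contradicting our standing assumption and completing the induction.
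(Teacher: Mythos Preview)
The paper does not supply its own proof of this proposition; it is simply quoted as Lemma~2.5 of \cite{BG:2000}. So there is nothing in the paper to compare against, and your attempt has to stand on its own.

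Your reduction to finitely generated $M$ and the base case $d=0$ are fine, and the observation that any nonzero submodule $K$ with $\gk(K)<d$ lets you invoke the inductive hypothesis is correct (the maximality of $K$ is irrelevant here, but harmless). The trouble is entirely in the case where every nonzero submodule of $M$ has GK-dimension exactly $d$, and there your outline has genuine gaps.

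First, the logical thread is tangled: you announce that you will show $M$ itself is critical, then instead produce a critical \emph{quotient} $M/L^{*}$, and then abandon $M/L^{*}$ entirely in favour of a localization argument that never refers to it. The closing phrase ``contradicting our standing assumption'' has no clear antecedent.

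More seriously, the localization step does not deliver what you claim. You pick $B\le A$ of rank $d$ with $B\bowtie M$ and want to pull a simple submodule $S$ of $M\mathcal X_B^{-1}$ back to a critical submodule of $M$. Two things go wrong. First, the localization map $M\to M\mathcal X_B^{-1}$ need not be injective: its kernel is the $F\s B$-torsion submodule $T$, and in the present case $T$, if nonzero, has $\gk(T)=d$, so you cannot simply quotient it out and still be looking for submodules of $M$. Second, even when the map is injective, setting $N=M\cap S$ you only obtain that $N/N'$ is $F\s B$-torsion for every $0<N'<N$; by the description in Section~\ref{GK-dim-bsc} this does \emph{not} by itself force $\gk(N/N')<d$, since $N/N'$ could fail to be $F\s B'$-torsion for some \emph{other} rank-$d$ subgroup $B'\le A$. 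Ruling that out is exactly the finite-partitivity--type content that \cite{BG:2000} establishes in proving Lemma~2.5, and your outline does not supply it.
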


\section{Finite length modules}

We have the following result.

\begin{theorem} 
\label{MT1} 
Let $M$ be a nonzero finitely  
generated  $F \s A$-module where  
$F \s A$ has center $F$.  
Let $C < A$ be a subgroup having a subgroup $C_0$ of  
finite index such that $F \s C_0$ is commutative.  
If $M$ is finitely generated as an $F \s C$-module then,    
\begin{enumerate} 
\item[(i)] $\gk(M) = \rk(C)$,  
\item[(ii)] $M$ is $F \s C$-torsion-free,  
\item[(iii)] $M$ has finite length.  
\end{enumerate}  
\end{theorem}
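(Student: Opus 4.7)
The plan is to exploit the simplicity of $F\s A$ (center $F$ gives simplicity by \cite[Proposition 1.3]{MP}) together with the classical behaviour of finitely generated modules over the commutative Noetherian domain $F\s C_0$, and then transfer conclusions from $F\s C_0$ to $F\s C$ using that $F\s C$ is a finite free extension of $F\s C_0$. The core step is to show that $M$ is $F\s C_0$-torsion-free. To this end I would first verify that $T_0:=T_{F\s C_0}(M)$ is $F\s A$-stable: for $f=\sum\lambda_c\bar c\in F\s C_0$, conjugation yields $\bar a f\bar a^{-1}=\sum\lambda_c[\bar a,\bar c]\,\bar c$, which lies in $F\s C_0\setminus\{0\}$ and kills $\bar a m$ whenever $f$ kills $m$. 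Since $F\s C$ is Noetherian (finite over the commutative Noetherian ring $F\s C_0$), $T_0$ is finitely generated over $F\s C$ and hence over $F\s C_0$. If $T_0\neq 0$, then $\ann_{F\s C_0}(T_0)$ is a nonzero ideal (a finite intersection of nonzero annihilators in the domain $F\s C_0$), contained in the two-sided ideal $\ann_{F\s A}(T_0)$ of $F\s A$; simplicity forces $T_0=0$.

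To promote this to $F\s C$-torsion-freeness (part~(ii)), I view $F\s C$ as a free left $F\s C_0$-module of rank $l:=[C:C_0]$ via coset representatives. For $g\in F\s C$, right multiplication by $g$ is a left $F\s C_0$-linear endomorphism of $F\s C$, so Cayley--Hamilton over the commutative ring $F\s C_0$ yields a monic integral relation $g^l+a_{l-1}g^{l-1}+\cdots+a_0=0$ with $a_i\in F\s C_0$. Taking a minimal-degree such relation and using that $F\s C$ is a domain forces the constant term $a_0$ to be nonzero, and rearranging gives $a_0=-hg$ for some $h\in F\s C$. Hence $gm=0$ implies $a_0 m=0$, so $T_{F\s C}(M)\subseteq T_0=0$. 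Part~(i) is then immediate: $\gk(M)\le\rk(C_0)=\rk(C)$ because $M$ is finitely generated over the affine commutative algebra $F\s C_0$ of GK dimension $\rk(C_0)$, while the lower bound follows from $C_0\bowtie M$ via the characterisation of $\gk$ recalled in Section~\ref{GK-dim-bsc}.

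For part~(iii) I localize at $\mathcal X:=F\s C_0\setminus\{0\}$. By (ii) the canonical map $M\to M\mathcal X^{-1}$ is injective, and $M\mathcal X^{-1}$ is finite-dimensional over the quotient division ring $\mathscr D_{C_0}$ of $F\s C_0$ because $M$ is finitely generated over $F\s C_0$. Given a strict chain $0=M_0\subsetneq M_1\subsetneq\cdots\subsetneq M_k=M$ of $F\s A$-submodules, the argument of the first paragraph applied to each nonzero factor $M_i/M_{i-1}$ (still finitely generated over $F\s C$) yields $F\s C_0$-torsion-freeness, so each $(M_i/M_{i-1})\mathcal X^{-1}$ is nonzero; additivity of $\mathscr D_{C_0}$-dimension along the localized filtration then forces $k\le\dim_{\mathscr D_{C_0}}(M\mathcal X^{-1})<\infty$. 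The main obstacle is the Cayley--Hamilton step in the passage from $F\s C_0$- to $F\s C$-torsion, since $F\s C_0$ need not be central in $F\s C$; the key observation is that right multiplication by $g$ remains $F\s C_0$-linear even when left multiplication does not, so the commutative Cayley--Hamilton argument applies on the right.
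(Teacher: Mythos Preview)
Your proof is correct but follows a genuinely different route from the paper's. The paper establishes (i) first by contradiction: if $\gk(M)<\rk(C)$, it picks $E_0<C_0$ with $\rk(E_0)=\gk(M)$ and $E_0\bowtie M_{C_0}$, then invokes Lemma~\ref{AG_reslt} (together with Remark~\ref{fi}) to produce a virtual complement $E_1$ of $E_0$ in $A$ with $F\s E_1$ commutative; a rank count gives $E_1\cap C_0\ne\langle 1\rangle$, and this intersection is central in $F\s(E_1C_0)$, contradicting Proposition~\ref{c_a_fis}. Part (ii) then falls out of (i) (a nonzero $F\s C$-torsion submodule $T$ would satisfy $\gk(T)=\rk(C)$, forcing $C\bowtie T$), and (iii) is obtained by citing \cite[Lemma~5.6 and Section~5.9]{MP}. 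You instead exploit simplicity of $F\s A$ directly: the $F\s C_0$-torsion submodule, being $F\s A$-stable and finitely generated over the commutative domain $F\s C_0$, would have a nonzero $F\s C_0$-annihilator sitting inside a proper two-sided $F\s A$-ideal, which is impossible. From $C_0\bowtie M$ you read off (i), promote to (ii) via the right-multiplication Cayley--Hamilton transfer from $C_0$ to $C$, and obtain (iii) by an explicit localization and $\mathscr D_{C_0}$-dimension count. Your argument is more self-contained --- it avoids both the structural Lemma~\ref{AG_reslt} and the appeal to \cite{MP} --- whereas the paper's approach treats $C$ uniformly without needing the $C_0\to C$ step and derives (i) independently of any torsion-freeness statement.
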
  
 
\begin{proof} (i) 
We shall denote the module 
$M$ regarded as 
$F \s C$-module as $M_C$. 
By hypothesis $M_C$ is finitely generated and 
so $M_{C_0}$ is also finitely generated where $M_{C_0}$ denotes $M_C$ viewed as $F \s C_0$-module.
By \cite[Lemma 2.7]{BG:2000}, \[ \gk(M) = \gk(M_C) = \gk(M_{C_0}) . \]
If $\gk(M) < \rk(C)$ we may pick a subgroup $E_0 < C_0$ with 
$\rk(E_0) < \rk(C_0)$ such that $E_0 \bowtie M_{C_0}$ and $\gk(M) = \rk(E_0)$ . 

By Lemmma \ref{AG_reslt} and 
Remark \ref{fi}, 
$E_0$ has 
a virtual complement  
$E_1$ in $A$ such that $F \s E_1$ 
is commutative.
Since $\rk(E_1) + \rk(C_0)$ exceeds $\rk(A)$, therefore $E_1 \cap C_0 > \langle 1 \rangle$.
Moreover as $E_1E_0$ has finite index in $A$, hence $E_1C_0$ has finite index in $A$.
But $\overline{E_1 \cap C_0}$ is central in  $F \s (E_1C_0)$ and hence by Proposition \ref{c_a_fis}, $F \s A$ has center larger than $F$.
This is contrary to the hypothesis in the theorem. So \[ \gk(M) \ge \rk(C). \] But the GK-dimension of an algebra $\mathcal A$ bounds the GK-dimensions of it's  modules (\cite{KL:2000}[Proposition 5.1(d)]) and so \[ \gk(M) \le \gk(F \s C) = \rk(C). \]

(ii) Suppose that the $F \s C$-torsion submodule $T$ of $M$ is nonzero.  
We recall that $T$ is an $F \s A$-submodule of $M$.    
Applying part (i) of the theorem just established to $T$  
we obtain $\gk(T) = \rk(C)$.  
 
From Section \ref{GK-dim-bsc} we know that  in this case $C \bowtie T$. However, this is contrary to the definition of $T$ as the $F \s C$-torsion submodule of $M$. 
Hence $T = 0$ and $M$ is $F \s C$-torsion-free. 

(iii) We first note that by part (i) and Proposition \ref{exact_fn} each nonzero subfactor of $M$ has the same GK dimension as $M$.  It now follows from  \cite[Lemma 5.6]{MP} and \cite[Section 5.9]{MP} 
that every descending chain in $M$ is eventually constant.

\end{proof}

\section{The GK dimensions of simple modules}

In this section we aim to determine the GK dimensions of simple $F \s A$-modules. 
This may depend to a large extent on the defining parameters $q_{ij}$. In \cite{MP} it was shown that when $\dim (F \s A) = 1$, then each simple $F \s A$-module has GK dimension equal to $\rk(A) - 1$. 

In general the GK dimensions of simple $F \s A$-modules depends on the defining co-cycle. But in the case $\dim(F \s A) = \rk(A) - 1$, we have the following result.

\begin{theorem}
Let $M$ be a simple-$F \s A$-module, where $\dim(F \s A) = \rk(A) - 1$. Let $Z$ be the subgroup of $A$ such that $\mathcal Z(F \s A) = F \s Z$ Then either $\gk(M) = 1$ or 
\[ \gk(M) \ge \rk(A) - \rk(Z) - 1.  \]    
\end{theorem}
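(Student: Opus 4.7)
The strategy is first to reduce to the case of trivial center and then to establish the dichotomy via an $F \s B$-torsion analysis for a maximum-rank commutative subalgebra $F \s B$. Set $n = \rk(A)$ and $z = \rk(Z)$.

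Since $F \s Z$ is commutative and $M$ is $F \s A$-simple, its image in the division ring $\text{End}_{F \s A}(M)$ is a commutative domain, so the kernel $\mathfrak p := \ann_{F \s Z}(M) \cap F \s Z$ is a prime ideal of $F \s Z$. Localizing $F \s A$ at the Ore set $F \s Z \setminus \mathfrak p$ and quotienting by $\mathfrak p$, we obtain an algebra $\bar R$ on which $M$ remains simple, which is a twisted group algebra of $A/Z$ over the residue field $K := (F \s Z)_\mathfrak p / \mathfrak p$ and which has center exactly $K$. For any $B \le A$ of rank $n-1$ with $F \s B$ commutative, $F \s (BZ)$ is still commutative (as $Z$ is central), so by the maximality in Theorem~\ref{Brookes_thm} one has $\rk(BZ) = n - 1$, forcing $Z \subseteq B$ up to finite index. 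Hence $BZ/Z$ yields a commutative subgroup of $A/Z$ of rank $n - z - 1$, giving $\dim(\bar R) = \rk(A/Z) - 1$. Since the $F \s A$-action on $M$ factors through $\bar R$, the two GK dimensions coincide, and the theorem reduces to proving, under $\mathcal Z(F \s A) = F$, that $\gk(M) = 1$ or $\gk(M) \ge n - 1$.

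Assume now $\mathcal Z(F \s A) = F$; in particular $F \s A$ is simple by \cite[Proposition 1.3]{MP}. Fix $B \le A$ from Theorem~\ref{Brookes_thm}. If $M$ is not $F \s B$-torsion then $B \bowtie M$ in the sense of Section~\ref{GK-dim-bsc} gives $\gk(M) \ge n - 1$. Otherwise $M$ is $F \s B$-torsion; setting $d = \gk(M)$, I would assume $d \ge 2$ and aim at $d \ge n - 1$. Using the Remark following Lemma~\ref{AG_reslt}, I would pass to a finite-index subgroup of $A$ if necessary and choose generators $x_1, \ldots, x_n$ of $A$ with $x_1, \ldots, x_{n-1}$ generating $B$. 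The bowtieing subset of size $d$ furnished by Section~\ref{GK-dim-bsc} then either (i) lies in $\{x_1, \ldots, x_{n-1}\}$, producing a commutative $C \le B$ of rank $d$ with $C \bowtie M$, or (ii) contains $x_n$, in which case $C \cap B$ has rank $d-1$, is commutative, and bowties $M$. In case (i), Lemma~\ref{AG_reslt} supplies a virtual complement $E$ of $C$ in $A$ with $F \s E$ commutative and $\rk(E) = n - d$; since $CE$ has finite index in $A$, Proposition~\ref{c_a_fis} gives $\mathcal Z(F \s (CE)) = F$, which forces the commutator pairing $(c,e) \mapsto [\bar c, \bar e]$ from $C \times E$ to $F^\times$ to be nondegenerate on each factor. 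In case (ii) I would attempt to enlarge $C \cap B$ within $B$ to a commutative rank-$d$ bowtieing subgroup, reducing to case (i), or else conclude $d = 1$.

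The main obstacle is converting the nondegenerate commutator pairing, together with the $F \s B$-torsion hypothesis and the faithfulness of $M$ as an $F \s B$-module (which follows from the simplicity of $F \s A$, since $\ann_{F \s B}(M)$ generates a two-sided ideal of $F \s A$ that annihilates $M$ and hence vanishes), into the sharp lower bound $d \ge n - 1$. Viewing $F \s A$ as a skew Laurent polynomial ring $F \s B[\bar t^{\pm 1}; \sigma]$ for a suitable $t \in A$, one obtains a decomposition $M = \sum_{i \in \mathbb Z} \bar t^i \cdot F \s B m$ as a sum of cyclic $F \s B$-modules $F \s B / \sigma^i(I)$, with $I = \ann_{F \s B}(m) \neq 0$; faithfulness of $M$ translates to $\bigcap_i \sigma^i(I) = 0$. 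The delicate task is to leverage this orbit condition together with the nondegenerate commutator pairing to show that the growth of $M$ attains the full Krull dimension $n - 1$ of $F \s B$ rather than any intermediate value between $1$ and $n - 1$.
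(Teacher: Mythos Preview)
Your proposal is explicitly incomplete: the ``main obstacle'' and ``delicate task'' you flag at the end are real gaps, and neither case~(i) nor case~(ii) is closed. The nondegenerate-pairing observation in case~(i) gives no rank inequality by itself (a rank-$d$ group can pair nondegenerately with a rank-$(n-d)$ group for any $d$, so nothing forces $d\ge n-1$), and the orbit-of-annihilators reformulation in the final paragraph is a detour the actual argument does not need.

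The paper proceeds differently and avoids both difficulties. It does not reduce to trivial centre. After arranging that $A/C$ is infinite cyclic (with $C$ the rank-$(n-1)$ commutative subgroup, your $B$), it picks a finitely generated critical $F\s C$-submodule $L\subseteq M$ and examines the natural surjection $L\otimes_{F\s C}F\s A\to M$. If this map is an isomorphism then $L$ is simple over the commutative affine algebra $F\s C$, hence finite-dimensional by the Nullstellensatz, and $\gk(M)=\gk(L)+1=1$. Otherwise \cite[Lemma~2.4]{BG:2000} gives $\gk(L)=\gk(M)=:l$, and because $L$ is an $F\s C$-module the rank-$l$ bowtieing subgroup $\langle y_1,\dots,y_l\rangle$ can be chosen \emph{inside} $C$; this induced-module dichotomy is exactly the mechanism that eliminates your case~(ii). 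Lemma~\ref{AG_reslt} then produces commuting monomials $\mu_t\bar y_{l+t}^s$ for $t=1,\dots,n-l$; for $t\le n-l-1$ these lie in $\overline C$ and hence already commute with $F\s C$, and a short computation with (\ref{com_id_1})--(\ref{com_id_4}) shows that suitable powers also commute with $\bar y_n$, so they are central in $F\s A$, giving $\rk(Z)\ge n-l-1$ directly. Translated into your notation, this is the step missing in your case~(i): the virtual complement $E$ meets $B$ in a subgroup of rank $(n-d)+(n-1)-n=n-d-1$, and $E\cap B$ centralises both $E$ and $B$, hence the finite-index subgroup $EB$, hence lies in $Z$ by Proposition~\ref{c_a_fis}. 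That intersection argument, not a pairing or growth argument, is what closes the proof.
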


\begin{remark}
We observe that $\rk(Z) \le \rk(A) - 2$. Otherwise $A$ contains a subgroup $A'$ of finite index such that $F \s A'$ is commutative but this would contradict Theorem \ref{Brookes_thm}.
\end{remark}

\begin{proof}
Since $\dim(F \s A) = \rk(A) - 1$, by Theorem \ref{Brookes_thm} $A$	 contains a subgroup $C$ with co-rank one such that $F \s C$ is commutative. However note that $A/C$ need not be infinite cyclic. In the case it is  
\begin{equation}
\label{sku_L_extn}
 F \s A = (F \s C)[X_n^{\pm 1}, \sigma],
\end{equation}
 
that is, $F \s A$ is a skew-Laurent extension of $F \s C$. Here $\sigma$ is the automorphism of $F \s C$ defined by $\gamma \mapsto X_n \gamma X_n^{-1}$, where $X_n$ denotes the image of a generator of $A$ modulo $C$.  

In the general case let $C$ be embedded in a subgroup $A'$ so $A'/C$ is infinite cyclic. Then the remarks made above apply to $F \s A'$. Now $F \s A$ is a finite normalizing extension of $F \s A'$ and so (e.g., \cite[Exercise 15A.3]{Rowen:2008}) 
a simple $F \s A$-module is a finite direct sum of simple $F \s A$-modules. 
As the GK dimension of a direct sum is the maximum of the of the GK dimensions of its summands (\cite[Proposition 5.1]{KL:2000}), it suffices to prove the theorem for $F \s A'$. We may thus assume that $A/C$ is infinite cyclic.

Now let $S$ be a simple $F \s A$-module. Let 
$L$ be a finitely generated critical $F \s C$-submodule of $S$. Consider the $F \s A$ submodule $L_1 : = L(F \s A)$. Since $S$ is simple $L_1 = S$. 

Now $L_1 = \sum_{i \in \mathbb Z} LX^i$. 
There is a surjective map $\theta$ from the induced module
\[ \oplus_{i \in \mathbb Z} LX^i \cong L \otimes_{F \s B} F \s A\]
to $L_1$. If this map is an isomorphism then $L$ is also simple and \[ \gk(L_1) = \gk(L) + 1 \] by 
\cite[Lemma 2.3]{BG:2000}.
Since $F \s C$ is commutative affine algebra $L$ is finite dimensional (over $F$) by the Nullstellensatz (see \cite[Section ]{MR:2001}). As the GK dimension of a finite dimensional module is zero, it follows that $\gk(L_1) = 1$. 

Now suppose that $\ker \theta \ne 0$. Then $\gk(L) = \gk(L_1)$ by \cite[Lemma 2.4]{BG:2000}.
Let $l := \gk(L_1)$ and $C = \langle y_1, y_2, \cdots, y_{n - 1} \rangle$. 
Since $A/C$ is infinite cyclic we may write $A = \langle y_1, y_2, \cdots, y_{n- 1}, y_n \rangle$.

By the criterion for the GK dimension of an $F \s A$-module (Section \ref{GK-dim-bsc}) we can pick $l$ generators, say 
$y_1, y_2, \cdots, y_l$ such that 
\[ \langle y_1, y_2, \cdots, y_l \rangle \bowtie L. \]
But then 
\[ \langle y_1, y_2, \cdots, y_l \rangle \bowtie L_1. \]
By Lemma \ref{AG_reslt}, 
there are monomials $\mu_1, \mu_2, \cdots, \mu_{n- l}$ in $\bar{y}_1, \cdots, \bar {y}_l$ so that 
\[ \mu_1\bar{y}_{l + 1}^s, \mu_2\bar{y}_{l +2}^s, \cdots, \mu_{n -l}\bar{y}_n^s \]
is a system of $n - l$ independent commuting monomials.
It is clear that the first $n - l - 1$ monomials centralize $F \s C$ since $F \s C$ is commutative.  

Next we observe that if $1 \le t \le n - l - 1$ then
\[ 1 = [\mu_t\bar{y}_{l + t}^s, \mu_{n -l}\bar{y}_n^s] =  [\mu_t\bar{y}_{l + t}^s, \bar{y}_n^s] = [(\mu_t\bar{y}_{l + t}^{s})^s, \bar{y}_n] = [\mu_t^s\bar{y}_{l + t}^{s^2}, \bar{y}_n] \]
noting Section \ref{com_cal}. 
Hence the system $\{\mu_t^s\bar{y}_{l + t}^{s^2}\}_{t  =  1}^{n - l - 1}$ of independent monomials centralizes the algebra $F \s A$.

It follows that  \[ \rk(Z) \ge n - l - 1 \]
which gives the desired inequality.
\end{proof}

\begin{theorem}
Suppose that $\dim(F \s A) = \rk(A) - 1$. There exists a simple $F \s A$-module $M$ with \[ \gk(M) \ge \rk(A) - \rk(Z) - 1, \] where $Z$ is the unique subgroup of $A$ so that 
\[ \mathcal Z(F \s A) = F \s Z. \] 

\end{theorem}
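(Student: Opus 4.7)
The plan is to use Theorem \ref{MT1} to construct a finite-length $F \s A$-module of Gelfand-Kirillov dimension exactly $n - \rk(Z) - 1$ and then extract a simple composition factor. I first reduce to the case $A/C$ infinite cyclic with $Z \le C$, where $C$ is a corank-one subgroup of $A$ with $F \s C$ commutative (given by Theorem \ref{Brookes_thm}). Replacing $C$ with $CZ$ preserves the rank and commutativity because $F \s Z$ is central. Passing to a finite-index subgroup $A_1 \supseteq C$ with $A_1/C$ infinite cyclic, and writing $Z_1$ for the subgroup of $A_1$ with $\mathcal Z(F \s A_1) = F \s Z_1$, one checks $\rk(Z_1) = \rk(Z)$: the inclusion $Z \cap A_1 \hookrightarrow Z_1$ gives one inequality, and for $w \in Z_1$ the identity $[\bar{w}, \bar{a}]^{[A:A_1]} = [\bar{w}, \bar{a}^{[A:A_1]}] = 1$ for all $a \in A$ forces $\bar{w}^{[A:A_1]}$ central in $F \s A$, giving the other. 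A simple $F \s A_1$-module $M_0$ of GK dimension $n - \rk(Z) - 1$ induces to a finite-length $F \s A$-module of the same GK dimension, in which by Proposition \ref{exact_fn} some simple composition factor retains that dimension. So we may assume $A_1 = A$ with $y \in F \s A$ lifting a generator of $A/C$ and $F \s A \cong (F \s C)[y^{\pm 1}, \sigma]$.

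Fix a maximal ideal $\mathfrak m$ of the Laurent-polynomial algebra $F \s Z$ and set $k := (F \s Z)/\mathfrak m$ (finite over $F$ by the Nullstellensatz) and $B := (F \s A)/\mathfrak m(F \s A)$. Since $\mathfrak m$ is central, $B$ is a twisted group algebra $k \s (A/Z)$ of rank $m := n - \rk(Z)$; the commutator pairing of $F \s A$ has radical exactly $Z$ and descends to a non-degenerate pairing on $A/Z$, so $B$ has center $k$. Writing $\widetilde C := C/Z$, the algebra $k \s \widetilde C$ is commutative of rank $m - 1$ and $B \cong (k \s \widetilde C)[y^{\pm 1}, \sigma]$. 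For any $\lambda \in k^\times$, set $N := B/(y - \lambda)B$. I claim the map $k \s \widetilde C \to N$, $c \mapsto [c]$, is a $k \s \widetilde C$-module isomorphism. Surjectivity follows inductively from $(y - \lambda)y^{i-1} \in (y - \lambda)B$, using $\sigma(\lambda) = \lambda$, to give $[y^i] = \lambda^i[1]$ for all $i \in \mathbb Z$. Injectivity: writing a generic element of $(y - \lambda) B$ as $\sum (\sigma(c_{i-1}) - \lambda c_i) y^i$ and demanding only the $y^0$-coefficient survive forces the recursion $c_i = \lambda^{-1}\sigma(c_{i-1})$ for $i \ne 0$; combined with the finite-support condition on $(c_i)$, this collapses every $c_i$ to zero, proving $(y - \lambda)B \cap k \s \widetilde C = 0$. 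Thus $N$ is cyclic over $k \s \widetilde C$.

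Applying Theorem \ref{MT1} to $B$ (whose center is its ground field $k$) and to the module $N$, I obtain $\gk_k(N) = \rk(\widetilde C) = m - 1$ and $N$ of finite length as $B$-module. Since $[k:F] < \infty$, also $\gk_F(N) = m - 1 = n - \rk(Z) - 1$. By Proposition \ref{exact_fn} some simple composition factor $M$ of $N$ satisfies $\gk_F(M) = n - \rk(Z) - 1$; pulled back through $F \s A \twoheadrightarrow B$, this $M$ is the desired simple $F \s A$-module. The main technical obstacle is the intersection computation $(y - \lambda)B \cap k \s \widetilde C = 0$ that guarantees cyclicity of $N$ over $k \s \widetilde C$ so that Theorem \ref{MT1} applies; it is a short recursion argument using finite support, but it is the delicate step of the construction.
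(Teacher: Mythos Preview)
Your route is quite different from the paper's. The paper localises at $F\s C$ to obtain the noncommutative principal ideal domain $R=\mathscr D_C[X^{\pm 1},\sigma]$, picks an irreducible $r\in R$ whose contraction to $F\s A$ contains a unitary element, and appeals to a dichotomy from \cite{BvO:2000}: either $M(r)=(F\s A)/((F\s A)\cap rR)$ is already simple and $F\s C$-torsion-free (hence of GK dimension $\ge n-1$), or it maps onto an $F\s C$-torsion simple $N$, which by \cite{Artamonov:1999} is finitely generated over $F\s C$, and then the argument of the preceding theorem gives $\gk(N)\ge n-\rk(Z)-1$. You instead kill the center by a maximal ideal of $F\s Z$, land in a rank-$m$ quantum torus $B=k\s(A/Z)$ with center $k$, exhibit an explicit $B$-module $N\cong k\s\widetilde C$ cyclic over the commutative subalgebra, and invoke Theorem~\ref{MT1} for finite length. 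Your construction is more self-contained (it avoids \cite{BvO:2000} and \cite{Artamonov:1999}) and in fact produces a simple module of GK dimension exactly $n-\rk(Z)-1$ rather than merely $\ge$.

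There is one genuine gap. You apply Theorem~\ref{MT1} to $B=k\s(A/Z)$, but $A/Z$ need not be free abelian: whenever some $[\bar a,\bar b]\in F^\times$ is a nontrivial root of unity, $A/Z$ acquires torsion, and this can occur with $\dim(F\s A)=n-1$ (take $n=3$, $q_{12}$ of infinite multiplicative order, $q_{13}$ a primitive $p$-th root of unity, $q_{23}=1$; then $\rk(Z)=1$ and $A/Z\cong\mathbb Z^2\times\mathbb Z/p$). Theorem~\ref{MT1}, and Lemma~\ref{AG_reslt} on which its proof rests, are formulated for free abelian $A$. The repair is short: pick a free-abelian subgroup $G_0\le A/Z$ of finite index and set $B_0=k\s G_0$. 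Your own finite-index commutator argument (applied now to $G_0\le A/Z$) shows that the central subgroup of $B_0$ has rank $0$, hence is trivial since $G_0$ is torsion-free; thus $B_0$ has center $k$ and Theorem~\ref{MT1} applies to $(B_0,N,\widetilde C\cap G_0)$, giving $\gk(N)=m-1$ and finite length of $N$ over $B_0$, hence over $B$. This passage should be made explicit.
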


\begin{proof}
Set $\mathcal S = F \s C \setminus \{0\}$. Then using equation (\ref{sku_L_extn}), the localization $(F \s A)\mathcal S^{-1}$ is a skew-Laurent extension 
\[ R: = (F \s A)\mathcal S^{-1} = \mathscr D_C [X^{\pm 1}, \sigma] \] 
where $\mathscr D_C$ stands for the quotient division ring of $F \s C$ and $\sigma$ is the automorphism of $\mathscr D_C$ defined by 
$\sigma(f) = X_nfX_n^{-1}$ for $f \in \mathscr D_C$. Let $r$ be an irreducible element in the non-commutative PLID $R$ such that $(F \s A) \cap rR$ contains a unitary element of $F \s A$, that is, an element $\alpha$ such that in the unique expression 
$\alpha = \sum_{i = l}^h\gamma_iX_n^{i}$ the coefficients of the lowest and the highest powers of $x_n$ are units in $F \s C$. 

It is not difficult to find examples of such elements $r$. For
example, $u_1X_n + u_2$, where $u_i$ is a unit in $F \s C$ is a linear polynomial in $\mathscr D_C[X^{\pm 1}, \sigma]$ and so is irreducible.  Let \[ M(r): = (F \s A)/J(r), \]  where $J(r) := (F \s A) \cap rR$. Note that $rR$ is a maximal right ideal in $R$. By \cite[Lemma 3.4(1)]{BvO:2000}, we know that $M(r)$ is a simple $F \s A$-module if and only if \[ \Hom_{F \s A}(M(r), N) = 0 \]  for all $F \s C$-torsion simple $F \s A$-modules $N$. Suppose this last condition holds true so that $M(r)$ is a $F \s C$-torsion-free simple $F \s A$-module. Moreover as it is $F \s C$-torsion-free \[ \gk(M(r)) \ge \rk(C) = n - 1. \] 

Otherwise by \cite[Proposition 2.1]{Artamonov:1999}, $M(r)$ and so $N$ is finitely generated as $F \s C$-module. Now a reasoning similar to the one given in the proof of the last theorem with $N$ in place of $L$ shows that 
\[ \gk(N) \ge \rk(A) - \rk(Z) - 1. \]

\end{proof}

\end{document}